\newtheorem{proposition}[subsection]{Proposition}
\newtheorem{theorem}[subsection]{Theorem}
\begin{document}

\baselineskip=0.30in

\vspace*{25mm}
 \begin{center}

 {\LARGE \bf On the Complementary Equienergetic Graphs}\\

  \vspace{5mm}

 {\large \bf Akbar Ali}$^{1,2,}\footnote{Corresponding
 author}$, {\large \bf  Suresh Elumalai}$^{3}$, {\large \bf Toufik Mansour}$^{3}$,\\ {\large \bf Mohammad Ali Rostami}$^{4}$

\vspace{3mm}

\baselineskip=0.20in

 $^1${\it Knowledge Unit of Science,\\
  University of Management and Technology, Sialkot, Pakistan}\\
$^2${\it College of Sciences,\\
  University of Hail, Hail, Saudi Arabia}\\
 {\rm e-mail:} {\tt akbarali.maths@gmail.com}\\[2mm]
 $^3${\it Department of Mathematics, \\
 University of Haifa, 3498838 Haifa, Israel\/} \\
 {\rm e-mail:} {\tt sureshkako@gmail.com, tmansour@univ.haifa.ac.il}\\[2mm]
$^4${\it Institute for Computer Science,\\
  Friedrich Schiller University Jena, Germany\/} \\
 {\rm e-mail:} {\tt a.rostami@uni-jena.de}

 \vspace{6mm}

 (Received July 30, 2019)
 \end{center}

\begin{abstract}
\noindent
Energy of a simple graph $G$, denoted by $\mathcal{E}(G)$, is the sum of the absolute values of the eigenvalues of $G$. Two graphs with the same order and energy are called equienergetic graphs. A graph $G$ with the property $G\cong \overline{G}$ is called self-complementary graph, where $\overline{G}$ denotes the complement of $G$. Two non-self-complementary equienergetic graphs $G_1$ and $G_2$ satisfying the property $G_1\cong \overline{G_2}$ are called complementary equienergetic graphs.
Recently, Ramane \textit{et al.} [Graphs equienergetic with their complements, MATCH Commun. Math. Comput. Chem. 82 (2019) 471--480]
initiated the study of the complementary equienergetic regular graphs
%
and they asked to study the complementary equienergetic non-regular graphs.
In this paper, by developing some computer codes and by making use of some software like Nauty, Maple and GraphTea, all the complementary equienergetic graphs with at most 10 vertices as well as all the members of the graph class $\Omega=\{G \ : \ \mathcal{E}(L(G)) = \mathcal{E}(\overline{L(G)}) \text{, the order of $G$ is at most 10}\}$ are determined, where $L(G)$ denotes the line graph of $G$. In the cases where we could not find the closed forms of the eigenvalues and energies of the obtained graphs, we verify the graph energies using a high precision computing (2000 decimal places) of Maple. A result about a pair of complementary equienergetic graphs is also given at the end of this paper.\\[2mm]

\end{abstract}

\baselineskip=0.30in

\section{Introduction}

Throughout this paper, by a graph we mean a simple graph (that is, a graph without loops and multiple edges).
Vertex set and edge set of a graph $G$ are denoted by $V(G)$ and $E(G)$, respectively. Edge connecting the vertices $u,v\in V(G)$ is denoted by $uv$. An $n$-vertex graph is a graph with order $n$. The complement $\overline{G}$ of a graph $G$ is the graph with the vertex set $V (\overline{G}) = V (G)$
and $uv\in E(\overline{G})$ if and only if $uv\not\in E(G)$.  A graph $G$ with the property $G\cong \overline{G}$ is called self-complementary graph.
The line graph $L(G)$ of a graph $G$ is a graph whose vertex set is $E(G)$ and two vertices of $L(G)$ are adjacent if and only if the corresponding edges
are adjacent in $G$. The matrix $A(G)=[a_{i,j}]_{n\times n}$ is called the adjacency matrix of a graph $G$ where
\[
a_{i,j}
= \begin{cases}
    1 & \text{if $v_iv_j\in E(G)$,} \\[2mm]
    0 & \text{otherwise,}
  \end{cases}
\]
and $V(G) = \{v_1,v_2,\cdots,v_n\}$. The eigenvalues of a graph $G$ are the eigenvalues of its adjacency matrix $A(G)$. It needs to be mentioned here that all the eigenvalues of a graph $G$ are always real because $A(G)$ is a real symmetric matrix. The multiset of the eigenvalues of a graph $G$ is called the spectrum of $G$ and it is denoted by $Spec(G)$.
If $\lambda_1,\lambda_2,\cdots,\lambda_r$ are distinct eigenvalues of a graph $G$ with multiplicities $m_1,m_2,\cdots,m_r$, respectively, then we write
$$
Spec(G)=  \left(\begin{matrix}
                    \lambda_1     &  \lambda_2 & \cdots & \lambda_r \\[2mm]
                    m_1     &  m_2 & \cdots & m_r
                  \end{matrix}\right).
$$
The spectrum of the union of two graphs is the union of their spectra. Two graphs with the same spectra are known as the cospectral graphs. Graph theoretical terminology not defined here can be found in some standard books of graph theory, like \cite{Bondy-08}.

The energy $\mathcal{E}(G)$ of a graph $G$ is the sum of the absolute values of the eigenvalues of $G$. This concept of graph energy, originating from HMO (H\"{u}ckel molecular orbital) theory, was firstly introduced by Gutman \cite{Gutman-78}. The graph energy can be considered as one of the most studied spectrum-based graph invariants in chemical graph theory. Details about the graph energy can be found in the books \cite{Gutman-16,Li-12}, book chapters \cite{Gutman-01,Gutman-09,Gutman-bc-11}, reviews \cite{Gutman-rev-16,Gutman-rev-17} (it should be mentioned here that scanned copy of the seminal paper \cite{Gutman-78} on graph energy is given in \cite{Gutman-rev-17}), recent papers \cite{Oboudi-19,Ashraf-19,Ashraf-19b,Ma-19,Zhou-19,Aashtab-19,Tian-19,Milovanovic-etal-19,Liu-19,Das-17} and in the related references listed therein.

Two $n$-vertex graphs with the same energy are called equienergetic graphs. Certainly, cospectral graphs are always equienergetic graphs.
Indulal and Vijayakumar \cite{Indulal-08} studied the equienergetic self-complementary graphs and
showed that there exist equienergetic self-complementary graphs on $n$ vertices for every
$n\in \{8,12,16,20,\cdots\}\cup\{73,97,121,145,\cdots\}$. Does there exists some non-self-complementary graph $G$ with the property $\mathcal{E}(G) =
\mathcal{E}( \overline{G} \,)$? The answer of this question is yes:
Ramane \textit{et al.} \cite{Ramane-05} proved that for
an $r$-regular graph $G$, with $r\ge3$, the equation $\mathcal{E}( L(L(G))) =
\mathcal{E}( \overline{L(L(G))}\, )$ holds if and only if $G$ is isomorphic to the complete graph of order 6. This result of Ramane \textit{et al.} gave a birth to the following definition:
two non-self-complementary equienergetic graphs $G_1$ and $G_2$ satisfying $G_1\cong \overline{G_2}$ are called complementary equienergetic graphs.
Recently, Ramane \textit{et al.} \cite{Ramane-19} established
several results concerning the complementary equienergetic regular graphs
%
and suggested to study the complementary equienergetic non-regular graphs.
%
In this paper, we continue the work done in \cite{Ramane-19}. More precisely, we determine all the complementary equienergetic graphs with at most 10 vertices as well as all the members of the graph class $\Omega=\{G \ : \ \mathcal{E}(L(G)) = \mathcal{E}(\overline{L(G)}) \text{, the order of $G$ is at most 10}\}$ by developing some computer codes and by making use of some software.

Firstly, by developing a java code for GraphTea \cite{GraphTea,Rostami-14}, we determined all the graphs $G$ (in graph6 format) of order at most 10 satisfying the equations
\begin{equation}\label{eq-AAA1}
\mathcal{E}(G) = \mathcal{E}(\overline{G})
\end{equation}
 and/or
\begin{equation}\label{eq-AAA2}
 \mathcal{E}(L(G)) = \mathcal{E}(\overline{L(G)})
\end{equation}
up to five decimal places. Then, we made a c-program code and used nauty \cite{NAUTY,McKay-14} for converting the graph6 codes into adjacency matrices.
Afterwards, we filtered these adjacency matrices by verifying Equations \eqref{eq-AAA1} and \eqref{eq-AAA2} up to twelve decimal places -- at this stage we dropped several adjacency matrices because their corresponding graphs do not satisfy either of \eqref{eq-AAA1} and \eqref{eq-AAA2} (for example, there are 54 pairs of graphs satisfying \eqref{eq-AAA1} up to five decimal places but there are only 47 pairs of graphs satisfying \eqref{eq-AAA1} up to 12 decimal places). Finally, we verified Equations \eqref{eq-AAA1} and \eqref{eq-AAA2} for the remaining adjacency matrices by using a high precision computing (200, 500 and then 2000 decimals) of Maple \cite{MAPLE} -- no further adjacency matrix was dropped at this stage.


\section{Main results}


Firstly, we address the problem of determining all the complementary equienergetic graphs on at most 10 vertices -- certainly such graphs occur in pairs.
From the definition of the complementary equienergetic graphs, it follows that if two graphs are complementary equienergetic then they are equienergetic as well, however there exist so many pairs of equienergetic graphs that are not complementary equienergetic. According to Stankovi\'c \textit{et al.} \cite{Stankovic-09} there are 14225 equisets of connected graphs on 10 vertices (a set of graphs is said to be equiset if all graphs in the set have the same energy and there are at least two non-cospectral graphs in the set) but we will see that there are at most 47 pairs of complementary equienergetic graphs on 10 vertices.

It can be easily verified that among all the graphs of order at most 5, there is only one pair of complementary equienergetic graphs and this pair consists of the 4-vertex cycle graph $C_4$ and its complement. Also, there is exactly one pair of complementary equienergetic graphs of order $n$ for every $n\in\{6,7\}$, see Figure \ref{fig1}. Eigenvalues and energies of the pairs of the graphs ($D_1$, $D_2$) and ($E_1$, $E_2$) depicted in Figure \ref{fig1}, are given in Table \ref{Class4}, from which it follows that each of these two pairs is non-cospectral.

\begin{figure}[ht]
 \centering
  \includegraphics[width=0.5\textwidth]{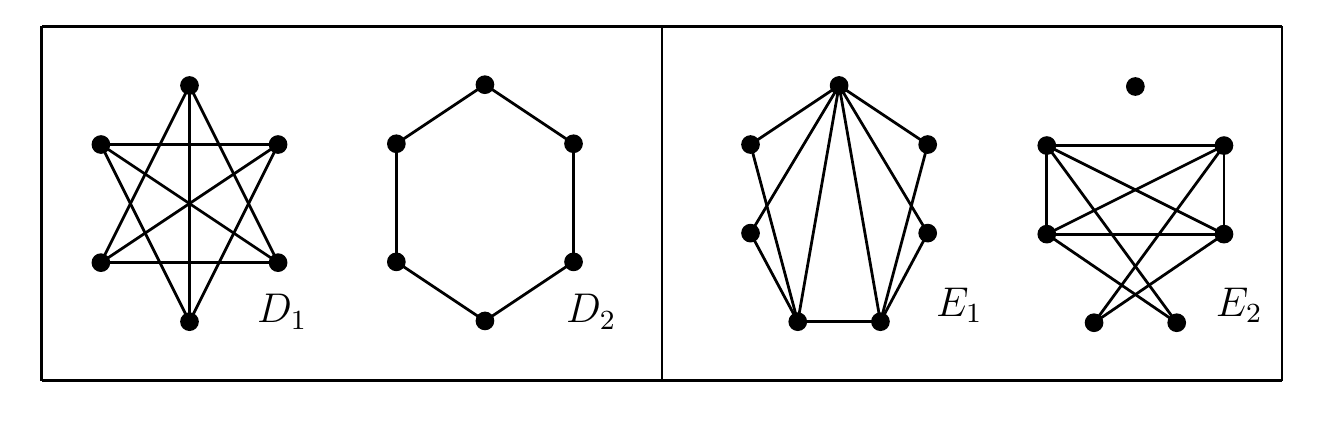}
   \caption{All the pairs of complementary equienergetic graphs on 6 and 7 vertices.}
    \label{fig1}
     \end{figure}

\small
\begin{longtable}[c]{|l|l|l|}
 \caption{Eigenvalues and energies of the graphs shown in Figure \ref{fig1}.\label{Class4}}
\endfirsthead
\multicolumn{3}{c}{Continuation of Table \ref{Class4}}\\ \hline
Graph &  Eigenvalues & Energy \\
\endhead \hline
\endfoot
\hline
Graph &  Eigenvalues & Energy \\ \hline

$D_{1}$ & $-2, -2, 0, 0, 1, 3$ & $8$   \\

$D_{2}$ & $-2, -1, -1, 1, 1, 2$ & \\  \hline

$E_{1}$ & $0, 0, -2, -2, 1, \frac{3}{2}\pm\frac{\sqrt{17}}{2}$ & $5+\sqrt{17}$   \\

$E_{2}$ & $-1, -1, -2,1, \frac{3}{2}\pm\frac{\sqrt{17}}{2}$ & \\
\end{longtable}

\normalsize
\baselineskip=0.30in

There are eight possible pairs of complementary equienergetic graphs of order 8, see Figure \ref{fig2}. From these eight pairs, exactly two pairs ($F_1$, $F_2$) and ($F_{15}$, $F_{16}$) are cospectral. For the remaining six pairs, either their eigenvalues and energies, or just their characteristic polynomials (where
we could not find the closed forms of all the eigenvalues and energies -- in that case energies are verified up to 2000 decimal places using Maple \cite{MAPLE}) are given in Table \ref{Class4a}.

\begin{figure}[ht]
 \centering
  \includegraphics[width=0.75\textwidth]{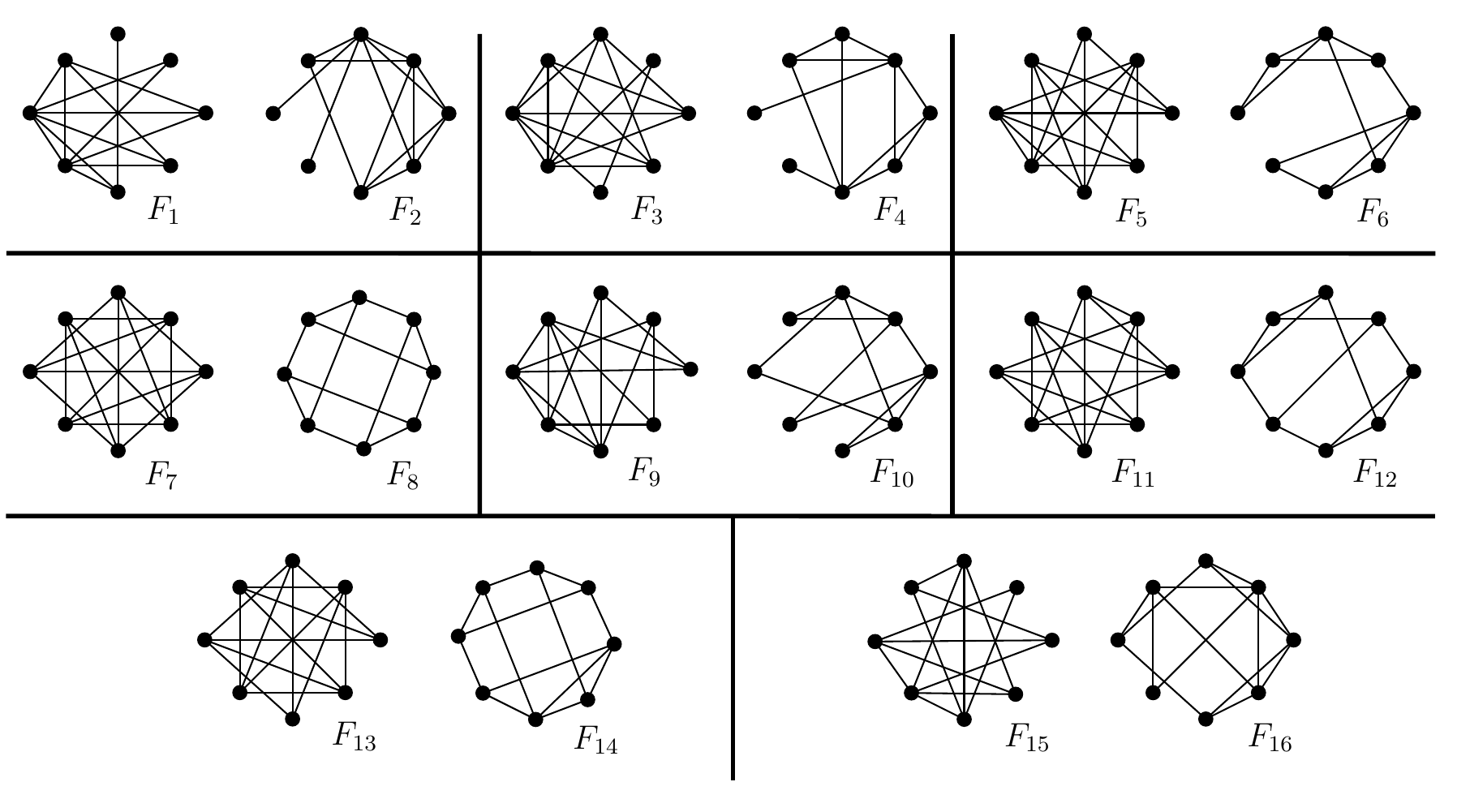}
   \caption{All the possible pairs of complementary equienergetic graphs on 8 vertices.}
    \label{fig2}
     \end{figure}

\small
\begin{longtable}[c]{|l|l|l|}
 \caption{Eigenvalues and energies, or just the characteristic polynomials, of the graphs $F_{3}$, $F_{4}$, $\cdots$, $F_{14}$, depicted in Figure \ref{fig2}.\label{Class4a}}
\endfirsthead
\multicolumn{3}{c}{Continuation of Table \ref{Class4a}}\\ \hline
Graph &  Eigenvalues / Characteristic polynomial & Energy \\
\endhead \hline
\endfoot
\hline
Graph &  Eigenvalues / Characteristic polynomial & Energy \\ \hline

$F_{3}$ & $\lambda^3(\lambda^5 - 16\lambda^3 -24\lambda^2 +16\lambda +32)$ & \\

$F_{4}$ & $\lambda^8 - 12\lambda^6 -8\lambda^5 +22\lambda^4 +16\lambda^3 -12\lambda^2 -8\lambda +1$ & \\  \hline

$F_{5}$ & $\lambda^2(\lambda^6 -16\lambda^4 -12\lambda^3 +28\lambda^2 +16\lambda -12)$ &  \\

$F_{6}$ & $\lambda^8 -12\lambda^6 -8\lambda^5 +34\lambda^4 +40\lambda^3 -8\lambda+1$ &    \\ \hline

$F_{7}$ & $0, 0, 0, -2, -2, -2, 2, 4$ & 12 \\

$F_{8}$ & $3, -3, 1, 1, 1, -1, -1, -1$ &  \\ \hline

$F_{9}$ & $-1, 1, 2\pm\sqrt{5}, -1\pm\sqrt{2}, -1\pm\sqrt{2}$ & $2(1+\sqrt{5}+2\sqrt{2})$ \\

$F_{10}$ & $0,  \pm\sqrt{2}, \pm\sqrt{2}, -2, 1\pm\sqrt{5}$ &  \\ \hline

$F_{11}$ & $0, 0, -2, 4, \pm\sqrt{2}, -1\pm\sqrt{3}$ & $2(3+\sqrt{2}+\sqrt{3})$   \\

$F_{12}$ & $-1, -1, \pm\sqrt{3}, -1\pm\sqrt{2}, 3, 1$ &  \\ \hline

$F_{13}$ & $\lambda(\lambda^7-15\lambda^5-12\lambda^4+45\lambda^3+40\lambda^2-24\lambda-8)$ &  \\

$F_{14}$ & $\lambda^8-13\lambda^6-4\lambda^5+36\lambda^4+8\lambda^3-33\lambda^2-4\lambda+9$ &  \\ \hline

\end{longtable}

\normalsize
\baselineskip=0.30in

On 9 vertices, there are exactly twenty four pairs of complementary equienergetic graphs, which are depicted in Figure \ref{fig3}. From these twenty four pairs, exactly nineteen pairs, namely ($G_{2i-1}$, $G_{2i}$) with $i=6,7,\cdots,24$, are cospectral. For the remaining five pairs, their eigenvalues and energies are given in Table \ref{Class4b}.

\begin{figure}
 \centering
  \includegraphics[width=0.99\textwidth]{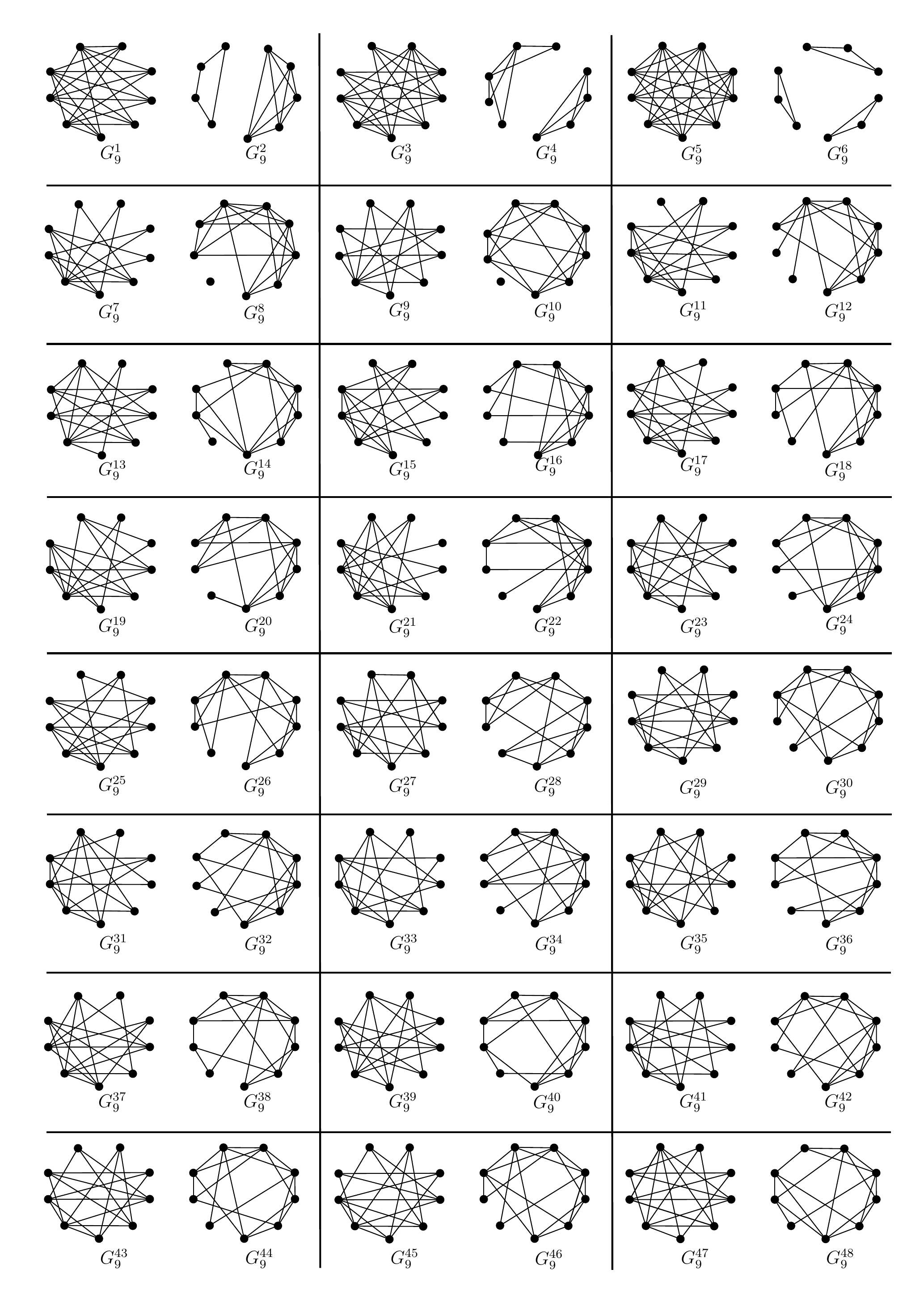}
   \caption{All the pairs of complementary equienergetic graphs of order 9.}
    \label{fig3}
     \end{figure}

\small
\begin{longtable}[c]{|l|l|l|}
 \caption{Eigenvalues and energies of the graphs $G_{1}$, $G_{2}$, $\cdots$, $G_{10}$, shown in Figure \ref{fig3}.\label{Class4b}}
\endfirsthead
\multicolumn{3}{c}{Continuation of Table \ref{Class4b}}\\ \hline
Graph &  Eigenvalues & Energy \\
\endhead \hline
\endfoot
\hline
Graph &  Eigenvalues & Energy \\ \hline

$G_{1}$ & $0,0,0,0,-1,-1,1,5,-4$ & 12 \\

$G_{2}$ & $0,0,-1,-1,-1,-1,2,4,-2$ & \\ \hline

$G_{3}$ & $0, 0, 0, 0, -1, -1, -4, 3\pm \sqrt{5}$ & 12 \\

$G_{4}$ & $0, 0, -1, -1, -1, -1, 3, 3, -2$ &    \\ \hline

$G_{5}$ & $-3, -3, 0, 0, 0, 0, 0, 0, 0, 6$ & 12 \\

$G_{6}$ & $-1, -1, -1, -1, -1, -1, 2, 2, 2$ & \\ \hline

$G_{7}$ & $0, -1, -1, 1, 1, -1\pm\sqrt{2}, 1\pm \sqrt{10}$ & $2(\sqrt{10}+\sqrt{2}+2)$ \\

$G_{8}$ & $0, 0, 0, -2, -2, \pm\sqrt{2}, 2 \pm \sqrt{10} $ & \\ \hline

$G_{9}$ & $0, 0, -2, -2,\pm\sqrt{2}, \pm\sqrt{2}, 4$ & $4(2+\sqrt{2})$ \\

$G_{10}$ & $0, -1, -1, -1\pm\sqrt{2}, -1\pm\sqrt{2}, 5, 1 $ & \\ \hline

\end{longtable}

\normalsize
\baselineskip=0.30in

All the possible pairs of complementary equienergetic graphs of order 10 (which are forty seven in total) are shown in Figures \ref{fig4} and \ref{fig5}. We note that none of these pairs is cospectral. We give either the eigenvalues and energies, or just the characteristic polynomials (where
we could not find the closed forms of the eigenvalues and energies -- in that case energies are verified up to 2000 decimal places using Maple \cite{MAPLE}) in Table \ref{Class5}.

\begin{figure}
 \centering
  \includegraphics[width=0.85\textwidth]{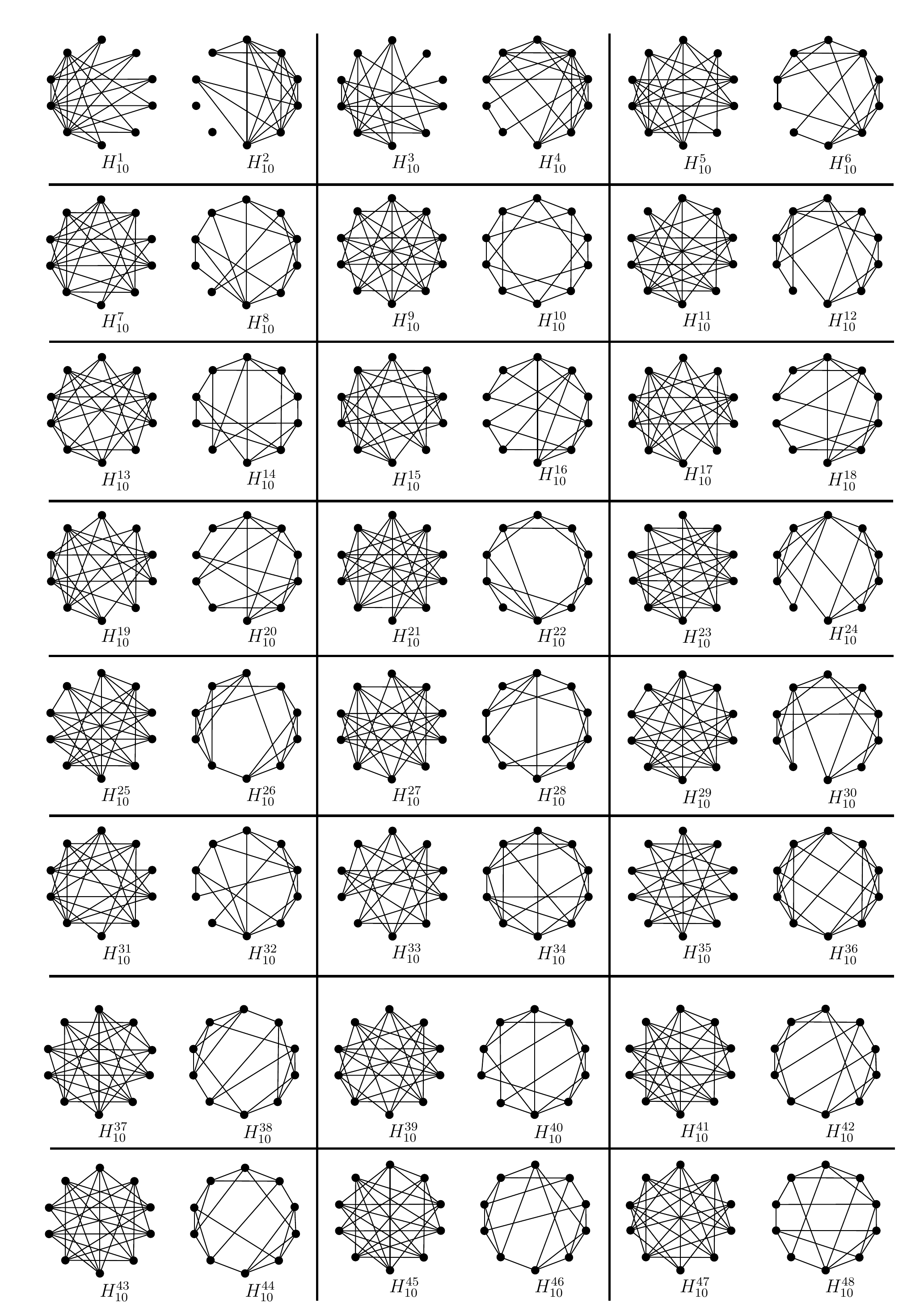}
   \caption{Twenty four possible pairs of complementary equienergetic graphs on 10 vertices. The remaining twenty three possible pairs of complementary equienergetic graphs on 10 vertices are shown in Figure \ref{fig5}.}
    \label{fig4}
      \end{figure}

\begin{figure}
 \centering
  \includegraphics[width=0.9\textwidth]{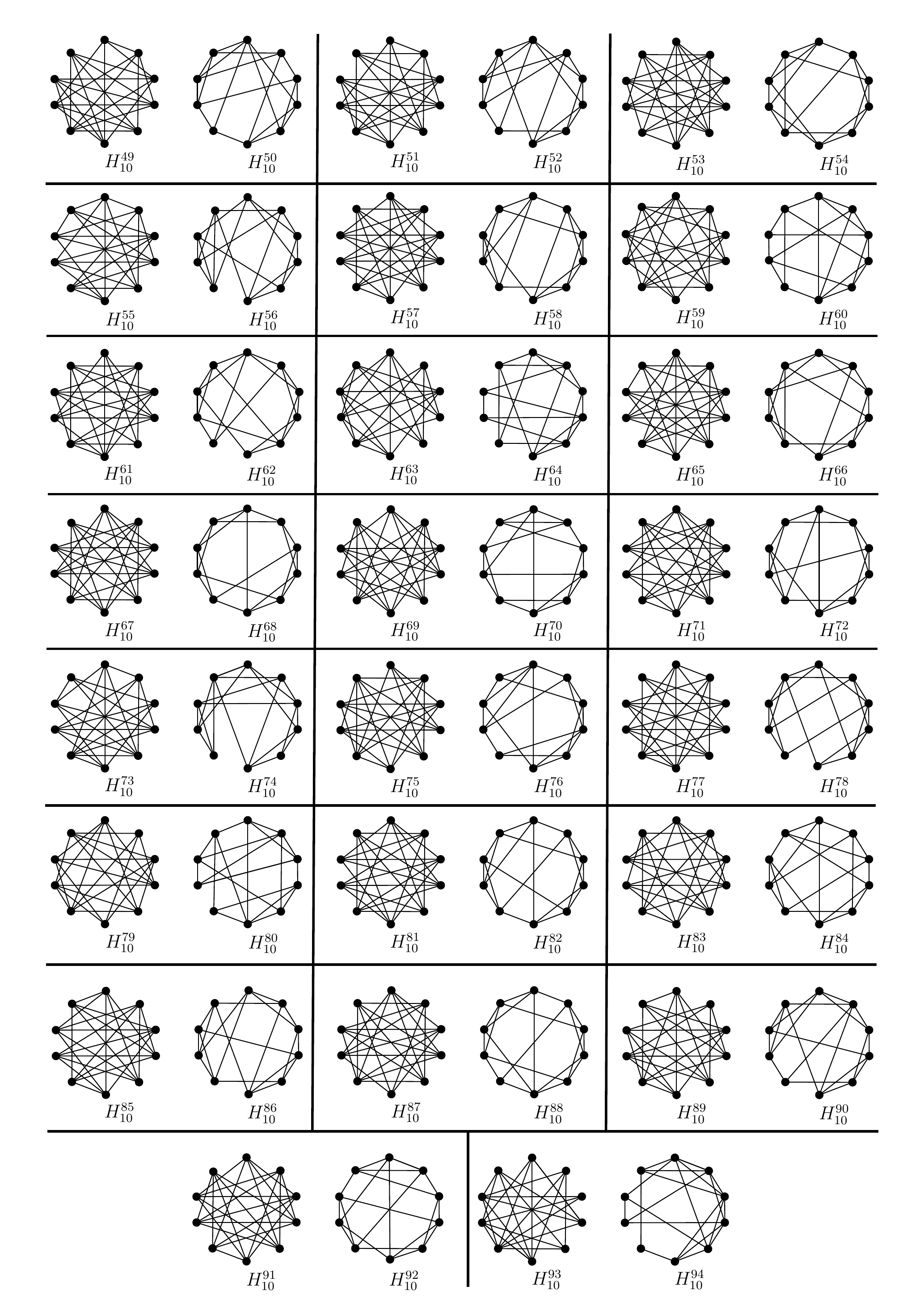}
   \caption{Twenty three possible pairs of complementary equienergetic graphs on 10 vertices, different from those given in Figure \ref{fig4}.}
    \label{fig5}
      \end{figure}

\footnotesize
\begin{longtable}[c]{|l|l|l|}
\caption{Eigenvalues and energies, or just the characteristic polynomials of the graphs depicted in Figures \ref{fig4} and \ref{fig5}.\label{Class5}}
\endfirsthead
\multicolumn{3}{c}{Continuation of Table \ref{Class5}}\\ \hline
Graph &  Eigenvalues / Characteristic polynomial & Energy \\
\hline
\endhead \hline
\endfoot
\hline
Graph &  Eigenvalues / Characteristic polynomial & Energy \\ \hline
$H_{1}$ &  $ 0, 0, 0, 0, -3, -1, \frac{5}{2}\pm \frac{\sqrt{37}}{2}, -\frac{1}{2} \pm \frac{\sqrt{13}}{2}$ & $4+\sqrt{37}+\sqrt{13}$ \\

$H_{2}$ &  $0, 0, \frac{5}{2} \pm \frac{\sqrt{37}}{2}, -\frac{1}{2}\pm \frac{\sqrt{13}}{2}, -1, -1, -1, -1$ &  \\ \hline

$H_{3}$ & $ ({\lambda}+2)({\lambda}^3-2{\lambda}^2-13{\lambda}+2)({\lambda}-1)^3({\lambda}+1)^3$ & \\

$H_{4}$ & $ {\lambda}^3({\lambda}-1)({\lambda}^3-5{\lambda}^2-6{\lambda}+12)({\lambda}+2)^3$ & \\ \hline

$H_{5}$ & $ {\lambda}^3({\lambda}+2)({\lambda}+1)({\lambda}^2+2{\lambda}-4)({\lambda}^3-5{\lambda}^2-4{\lambda}+12)$ & \\

$H_{6}$ & $ {\lambda}({\lambda}-1)({\lambda}^2-5)({\lambda}^3-2{\lambda}^2-11{\lambda}+4)({\lambda}+1)^3$ & \\ \hline

$H_{7}$ & $ {\lambda}^3({\lambda}+2)({\lambda}+1)({\lambda}^2+2{\lambda}-4)({\lambda}^3-5{\lambda}^2-4{\lambda}+12)$ &\\

$H_{8}$ & $ {\lambda}({\lambda}-1)({\lambda}^2-5)({\lambda}^3-2{\lambda}^2-11{\lambda}+4)({\lambda}+1)^3$ & \\ \hline

$H_{9}$ & $-2, -2, -2, -2, 0, 0, 0, 0, 3, 5$  & $ 16$  \\

$H_{10}$ & $-4, -1, -1, -1, -1, 1, 1, 1, 1, 4$  &   \\ \hline

$H_{11}$ & $ {\lambda}^3({\lambda}-1)({\lambda}+2)({\lambda}^2-4{\lambda}-6)({\lambda}^3+3{\lambda}^2-4{\lambda}-10)$ &\\

$H_{12}$ & $ {\lambda}({\lambda}-1)({\lambda}^2-2{\lambda}-9)({\lambda}^3-7{\lambda}+4)({\lambda}+1)^3$ &\\ \hline

$H_{13}$ & $ {\lambda}^3({\lambda}+2)({\lambda}^3-5{\lambda}^2-2{\lambda}+8)({\lambda}^3+3{\lambda}^2-4{\lambda}-8)$ &\\

$H_{14}$ & $ ({\lambda}-1)({\lambda}^3-7{\lambda}+2)({\lambda}^3-2{\lambda}^2-9{\lambda}+2)({\lambda}+1)^3$ &\\  \hline

$H_{15}$ & $ {\lambda}^3({\lambda}+2)({\lambda}+1)({\lambda}^2+2{\lambda}-4)({\lambda}^3-5{\lambda}^2-4{\lambda}+16)$ &\\

$H_{16}$ & $ {\lambda}({\lambda}-1)({\lambda}^2-5)({\lambda}^3-2{\lambda}^2-11{\lambda}+8)({\lambda}+1)^3$ &\\  \hline

$H_{17}$ & $ {\lambda}^3({\lambda}+2)({\lambda}+1)({\lambda}^2+2{\lambda}-4)({\lambda}^3-5{\lambda}^2-4{\lambda}+16)$ &\\

$H_{18}$ & $ {\lambda}({\lambda}-1)({\lambda}^2-5)({\lambda}^3-2{\lambda}^2-11{\lambda}+8)({\lambda}+1)^3$ &\\  \hline

$H_{19}$ & ${\lambda}^3({\lambda}+2)({\lambda}+1)({\lambda}^2+2{\lambda}-4)({\lambda}^3-5{\lambda}^2-4{\lambda}+16)$ &\\

$H_{20}$ & $ {\lambda}({\lambda}-1)({\lambda}^2-5)({\lambda}^3-2{\lambda}^2-11{\lambda}+8)({\lambda}+1)^3$ &\\ \hline

$H_{21}$ & $ -1, 1, 0, 0, 2 \pm \sqrt{10}, -1 \pm \sqrt{5},   -1\pm\sqrt{3}$ & $2(1+\sqrt{10}+\sqrt{5}+\sqrt{3})$ \\

$H_{22}$ & $-1, -1, 0, 0, \pm\sqrt{5}, \pm\sqrt{3}, 1\pm\sqrt{10}$ &  \\   \hline

$H_{23}$ & $-2, 1, 0, 0, \pm \sqrt{2}, 2 \pm \sqrt{10},  -\frac{3}{2}\pm\frac{\sqrt{17}}{2}$ &     $3+2\sqrt{2}+2\sqrt{10}+\sqrt{17}$ \\

$H_{24} $& $ 0, 1, -1, -1, 1\pm\sqrt{10}, \frac{1}{2}\pm\frac{\sqrt{17}}{2}, -1\pm\sqrt{2}$ &    \\ \hline

$H_{25}$ & $ {\lambda}^2({\lambda}-1)({\lambda}-5)({\lambda}+1)({\lambda}^2+{\lambda}-1)({\lambda}^3+4{\lambda}^2-2{\lambda}-11)$ & \\

$H_{26}$ & $ {\lambda}({\lambda}-4)({\lambda}+2)({\lambda}^2+{\lambda}-1)({\lambda}^3-{\lambda}^2-7{\lambda}+6)({\lambda}+1)^2$ &\\  \hline

$H_{27}$ & $ {\lambda}^2({\lambda}^3-5{\lambda}^2-2{\lambda}+8)({\lambda}^5+5{\lambda}^4+2{\lambda}^3-16{\lambda}^2-12{\lambda}+4)$ & \\

$H_{28}$ & $ ({\lambda}^3-2{\lambda}^2-9{\lambda}+2)({\lambda}^5-8{\lambda}^3+2{\lambda}^2+11{\lambda}-2)({\lambda}+1)^2$ & \\  \hline

$H_{29}$ & $ {\lambda}^2({\lambda}^3-5{\lambda}^2-2{\lambda}+8)({\lambda}^5+5{\lambda}^4+2{\lambda}^3-16{\lambda}^2-12{\lambda}+4)$ & \\

$H_{30}$ & $ ({\lambda}^3-2{\lambda}^2-9{\lambda}+2)({\lambda}^5-8{\lambda}^3+2{\lambda}^2+11{\lambda}-2)({\lambda}+1)^2$ & \\  \hline

$H_{31}$ & $ {\lambda}^3({\lambda}^2+{\lambda}-4)({\lambda}^3-5{\lambda}^2-4{\lambda}+12)({\lambda}+2)^2$ & \\

$H_{32}$ & $ ({\lambda}^2+{\lambda}-4)({\lambda}^3-2{\lambda}^2-11{\lambda}+4)({\lambda}-1)^2({\lambda}+1)^3$ &\\ \hline

$H_{33}$ &  $ -3, 0, 4, 1, 1, -1, -1, -1, \pm \sqrt{5} $ & $12+2\sqrt{5}$ \\

$H_{34}$ & $-1, 2, 5, -2, -2, 0, 0, 0, -1 \pm \sqrt{5}$ &  \\ \hline

$H_{35}$ & $ ({\lambda}-4)({\lambda}^4+3{\lambda}^3-5{\lambda}^2-11{\lambda}+4)({\lambda}-1)^2({\lambda}+1)^3$ &\\

$H_{36}$ & $ {\lambda}^3({\lambda}-5)({\lambda}^4+{\lambda}^3-8{\lambda}^2-4{\lambda}+8)({\lambda}+2)^2$ &\\  \hline

$H_{37}$ & $ {\lambda}^2({\lambda}-5)({\lambda}^3+3{\lambda}^2-5{\lambda}-11)({\lambda}^2+{\lambda}-1)^2$ &\\

$H_{38}$ & $ ({\lambda}-4)({\lambda}^3-8{\lambda}+4)({\lambda}+1)^2({\lambda}^2+{\lambda}-1)^2$ &\\  \hline

$H_{39}$ & $ {\lambda}^3({\lambda}-2)({\lambda}+3)({\lambda}^3-5{\lambda}^2-2{\lambda}+8)({\lambda}+2)^2$ &\\

$H_{40}$ & $({\lambda}-2)({\lambda}+3)({\lambda}^3-2{\lambda}^2-9{\lambda}+2)({\lambda}-1)^2({\lambda}+1)^3$ &\\  \hline

$H_{41}$ & $ {\lambda}^2({\lambda}-5)({\lambda}+1)({\lambda}^2+{\lambda}-1)({\lambda}^4+3{\lambda}^3-6{\lambda}^2-13{\lambda}+11)$ &\\

$H_{42}$ & $ {\lambda}({\lambda}-4)({\lambda}^2+{\lambda}-1)({\lambda}^4+{\lambda}^3-9{\lambda}^2-4{\lambda}+16)({\lambda}+1)^2$ & \\  \hline

$H_{43}$ & $ {\lambda}^2({\lambda}-5)({\lambda}+1)({\lambda}^3-4{\lambda}+2)({\lambda}^3+4{\lambda}^2-6)$ & \\

$H_{44}$ & $ {\lambda}({\lambda}-4)({\lambda}^3-{\lambda}^2-5{\lambda}+3)({\lambda}^3+3{\lambda}^2-{\lambda}-5)({\lambda}+1)^2$ &\\  \hline

$H_{45}$ & $ {\lambda}^3({\lambda}-5)({\lambda}+2)({\lambda}^2-2)({\lambda}^3+3{\lambda}^2-4{\lambda}-10)$ &\\

$H_{46}$ & $ ({\lambda}-1)({\lambda}-4)({\lambda}^2+2{\lambda}-1)({\lambda}^3-7{\lambda}+4)({\lambda}+1)^3$ &\\  \hline

$H_{47}$ & $ {\lambda}^3({\lambda}+3)({\lambda}+2)({\lambda}^2-2)({\lambda}^3-5{\lambda}^2-4{\lambda}+18)$ &\\

$H_{48}$ & $ ({\lambda}-1)({\lambda}-2)({\lambda}^2+2{\lambda}-1)({\lambda}^3-2{\lambda}^2-11{\lambda}+10)({\lambda}+1)^3$  & \\  \hline

$H_{49}$ & $ {\lambda}^3({\lambda}+2)({\lambda}^3+3{\lambda}^2-3{\lambda}-8)({\lambda}^3-5{\lambda}^2-3{\lambda}+14)$ & \\

$H_{50}$ & $ ({\lambda}-1)({\lambda}^3-2{\lambda}^2-10{\lambda}+7)({\lambda}^3-6{\lambda}+3)({\lambda}+1)^3$ &\\  \hline

$H_{51}$ & $ {\lambda}^3({\lambda}+2)({\lambda}^3+3{\lambda}^2-3{\lambda}-8)({\lambda}^3-5{\lambda}^2-3{\lambda}+14)$ &\\

$H_{52}$ & $ ({\lambda}-1)({\lambda}^3-2{\lambda}^2-10{\lambda}+7)({\lambda}^3-6{\lambda}+3)({\lambda}+1)^3$ &\\  \hline

$H_{53}$ & $ {\lambda}^2({\lambda}-5)({\lambda}^3+4{\lambda}^2+2{\lambda}-2)({\lambda}^4+{\lambda}^3-6{\lambda}^2-4{\lambda}+6)$ &\\

$H_{54}$ & $ ({\lambda}-4)({\lambda}^3-{\lambda}^2-3{\lambda}+1)({\lambda}^4+3{\lambda}^3-3{\lambda}^2-7{\lambda}+4)({\lambda}+1)^2$ & \\ \hline

$H_{55}$ & $-1, 5, 0, 0, \pm \sqrt{2}, -1 \pm\sqrt{5},-1\pm\sqrt{3}$ & $ 2(3+\sqrt{2}+\sqrt{5}+\sqrt{3})$ \\

$H_{56}$ & $ 0, 4, -1, -1, \pm \sqrt{5}, \pm \sqrt{3},  -1\pm\sqrt{2}$ &  \\  \hline

$H_{57}$ & $-2, 5, 0, 0, \pm \sqrt{2}, \pm \sqrt{2}, -\frac{3}{2} \pm \frac{\sqrt{17}}{2}$  & $ 7+ \sqrt{17}+4\sqrt{2}$ \\

$H_{58}$ & $ 1, 4, -1, -1, \frac{1}{2} \pm \frac{\sqrt{17}}{2},  -1\pm\sqrt{2},  -1\pm\sqrt{2}$ &   \\ \hline

$H_{59}$ & ${\lambda}^3({\lambda}-1)({\lambda}^3-5{\lambda}^2-6{\lambda}+28)({\lambda}+2)^3$ &\\

$H_{60}$ & $({\lambda}+2)({\lambda}^3-2{\lambda}^2-13{\lambda}+18)({\lambda}-1)^3({\lambda}+1)^3$ &\\  \hline

$H_{61}$ & ${\lambda}^2({\lambda}+3)({\lambda}^2-2)({\lambda}^2+2{\lambda}-2)({\lambda}^3-5{\lambda}^2-2{\lambda}+8)$ &\\

$H_{62}$ & $ ({\lambda}-2)({\lambda}^2-3)({\lambda}^2+2{\lambda}-1)({\lambda}^3-2{\lambda}^2-9{\lambda}+2)({\lambda}+1)^2$ &\\  \hline

$H_{63}$ & $ {\lambda}^2({\lambda}+2)({\lambda}^2+2{\lambda}-4)({\lambda}^2+{\lambda}-1)({\lambda}^3-5{\lambda}^2-3{\lambda}+12)$ &\\

$H_{64}$ & $ ({\lambda}-1)({\lambda}^2-5)({\lambda}^2+{\lambda}-1)({\lambda}^3-2{\lambda}^2-10{\lambda}+5)({\lambda}+1)^2$ &\\  \hline

$H_{65}$ & $ {\lambda}^2({\lambda}-5)({\lambda}^3+2{\lambda}^2-4{\lambda}-6)({\lambda}^4+3{\lambda}^3-2{\lambda}^2-6{\lambda}+2)$ &\\

$H_{66}$ & $ ({\lambda}-4)({\lambda}^3+{\lambda}^2-5{\lambda}+1)({\lambda}^4+{\lambda}^3-5{\lambda}^2-3{\lambda}+4)({\lambda}+1)^2$ &\\  \hline

$H_{67}$ & $ {\lambda}^2({\lambda}-5)({\lambda}^2+{\lambda}-1)({\lambda}^5+4{\lambda}^4-3{\lambda}^3-19{\lambda}^2+2{\lambda}+19)$ &\\

$H_{68}$ & $ ({\lambda}-4)({\lambda}^2+{\lambda}-1)({\lambda}^5+{\lambda}^4-9{\lambda}^3-4{\lambda}^2+20{\lambda}-4)({\lambda}+1)^2$ &\\  \hline

$H_{69}$ & $ {\lambda}^2({\lambda}-5)({\lambda}^2+{\lambda}-1)({\lambda}^5+4{\lambda}^4-3{\lambda}^3-19{\lambda}^2+2{\lambda}+19)$ &\\

$H_{70}$ & $ ({\lambda}-4)({\lambda}^2+{\lambda}-1)({\lambda}^5+{\lambda}^4-9{\lambda}^3-4{\lambda}^2+20{\lambda}-4)({\lambda}+1)^2$ &\\  \hline

$H_{71}$ & $ {\lambda}^2({\lambda}-5)({\lambda}+2)({\lambda}^6+3{\lambda}^5-6{\lambda}^4-16{\lambda}^3+12{\lambda}^2+16{\lambda}-8)$ &\\

$H_{72}$ & $ ({\lambda}-1)({\lambda}-4)({\lambda}^6+3{\lambda}^5-6{\lambda}^4-18{\lambda}^3+9{\lambda}^2+23{\lambda}-4)({\lambda}+1)^2$ &\\  \hline

$H_{73}$ & $ {\lambda}^2({\lambda}-1)({\lambda}^2+2{\lambda}-4)({\lambda}^3-5{\lambda}^2-2{\lambda}+8)({\lambda}+2)^2$ & \\

$H_{74}$ & $ ({\lambda}+2)({\lambda}^2-5)({\lambda}^3-2{\lambda}^2-9{\lambda}+2)({\lambda}-1)^2({\lambda}+1)^2$ & \\  \hline

$H_{75}$ & $ {\lambda}^2({\lambda}-1)({\lambda}^2+2{\lambda}-4)({\lambda}^3-5{\lambda}^2-2{\lambda}+8)({\lambda}+2)^2$ & \\

$H_{76}$ & $ ({\lambda}+2)({\lambda}^2-5)({\lambda}^3-2{\lambda}^2-9{\lambda}+2)({\lambda}-1)^2({\lambda}+1)^2$ & \\  \hline

$H_{77}$ & $ {\lambda}^2({\lambda}-1)({\lambda}^2+2{\lambda}-4)({\lambda}^3-5{\lambda}^2-2{\lambda}+8)({\lambda}+2)^2$ &\\

$H_{78}$ & $ ({\lambda}+2)({\lambda}^2-5)({\lambda}^3-2{\lambda}^2-9{\lambda}+2)({\lambda}-1)^2({\lambda}+1)^2$ & \\  \hline

$H_{79}$ & $ {\lambda}^2({\lambda}-1)({\lambda}^2+2{\lambda}-4)({\lambda}^3-5{\lambda}^2-2{\lambda}+8)({\lambda}+2)^2$ &\\

$H_{80}$ & $ ({\lambda}+2)({\lambda}^2-5)({\lambda}^3-2{\lambda}^2-9{\lambda}+2)({\lambda}-1)^2({\lambda}+1)^2$ &\\  \hline

$H_{81}$ & $ {\lambda}^2({\lambda}-5)({\lambda}^2-2)({\lambda}^3+{\lambda}^2-6{\lambda}+2)({\lambda}+2)^2$ & \\

$H_{82}$ & $ ({\lambda}-4)({\lambda}^2+2{\lambda}-1)({\lambda}^3+2{\lambda}^2-5{\lambda}-8)({\lambda}-1)^2({\lambda}+1)^2$ &  \\ \hline

$H_{83} $  & $  2, 5, -2, -2, -2, 0, 0, 0,   -\frac{1}{2}\pm\frac{\sqrt{17}}{2}$  & $ 13+\sqrt{17}$  \\

$H_{84}$ & $-3, 4, -1, -1, -1, 1, 1, 1, -\frac{1}{2}\pm\frac{\sqrt{17}}{2}$ &   \\  \hline

$H_{85}$ &  $  -3, -1, 1, 5, 0, 0, -\frac{1}{2}\pm\frac{\sqrt{13}}{2}, -\frac{1}{2}\pm\frac{\sqrt{13}}{2}$   & $ 10+2\sqrt{13}$   \\

$H_{86}$ & $ -2, 0, 2, 4, -1, -1, -\frac{1}{2}\pm\frac{\sqrt{13}}{2},-\frac{1}{2}\pm\frac{\sqrt{13}}{2}$  &    \\ \hline

$H_{87}$ & $ {\lambda}^2({\lambda}^3+{\lambda}^2-5{\lambda}+2)({\lambda}^3-5{\lambda}^2-3{\lambda}+14)({\lambda}+2)^2$ & \\

$H_{88}$ & $ ({\lambda}^3-2{\lambda}^2-10{\lambda}+7)({\lambda}^3+2{\lambda}^2-4{\lambda}-7)({\lambda}-1)^2({\lambda}+1)^2$ & \\  \hline

$H_{89}$ & $ {\lambda}^2({\lambda}^3+{\lambda}^2-5{\lambda}+2)({\lambda}^3-5{\lambda}^2-3{\lambda}+14)({\lambda}+2)^2$ & \\

$H_{90}$ & $ ({\lambda}^3-2{\lambda}^2-10{\lambda}+7)({\lambda}^3+2{\lambda}^2-4{\lambda}-7)({\lambda}-1)^2({\lambda}+1)^2$ & \\ \hline

$H_{91}$ &  $ -2, 5, 0, 0, \pm\sqrt{2},  -\frac{1}{2}\pm\frac{\sqrt{17}}{2}, -1\pm\sqrt{3} $ & $ 7+2\sqrt{3}+\sqrt{17}+2\sqrt{2}$ \\

$H_{92}$ &   $  1, 4, -1, -1, \pm\sqrt{3}, -\frac{1}{2}\pm\frac{\sqrt{17}}{2}, -1\pm\sqrt{2} $ &  \\  \hline

$H_{93}$ & $ 5, -2, -2, 1, 1, 1, -1\pm\sqrt{2},  -1\pm\sqrt{2}$ & $  12+4\sqrt{2}$ \\

$H_{94}$ & $  1, -2, -2, -2, \pm \sqrt{2}, \pm\sqrt{2},  \frac{5}{2}\pm\frac{\sqrt{17}}{2}$ &  \\  \hline

\end{longtable}

\normalsize
\baselineskip=0.30in

Next, we determine all the possible members of the graph class $$\Omega=\{G \ : \ \mathcal{E}(L(G)) = \mathcal{E}(\overline{L(G)}) \text{, the order of $G$ is at most 10}\},$$ see Figure \ref{fig6}. In what follows, we prove a result, which guaranties that the graph $LG^{9}_5 $ (shown in Figure \ref{fig6}) belongs to the class $ \Omega$.

\begin{proposition}\label{prop-A0}

For $p\ge4$, it holds that $\mathcal{E}\left(L\left(K_{p+2}^{(p)}\right)\right) = \mathcal{E}\left(\overline{L\left(K_{p+2}^{(p)}\right)}\right)$, where $K_p^{(q)}$ is the graph obtained from the complete graphs $K_p$ and $K_q$ by identifying one of their vertices.

\end{proposition}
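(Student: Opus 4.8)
The plan is to compute the spectra of $L(G)$ and of $\overline{L(G)}$ explicitly, where $G:=K_{p+2}^{(p)}$, and then compare the sums of absolute values of eigenvalues. Label the identified vertex $w$, the remaining vertices of the $K_{p+2}$ by $u_{1},\dots,u_{p+1}$ and those of the $K_{p}$ by $v_{1},\dots,v_{p-1}$; then $G$ is connected and non-bipartite, with $n:=2p+1$ vertices and $m:=\binom{p+2}{2}+\binom{p}{2}=p^{2}+p+1$ edges.

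First I would find $\operatorname{Spec}(L(G))$ through the vertex--edge incidence matrix $R$ of $G$: since $RR^{T}=Q(G)$ (the signless Laplacian) and $R^{T}R=A(L(G))+2I$, and since $\operatorname{rank}R=n$, the spectrum of $L(G)$ consists of the numbers $q-2$ with $q\in\operatorname{Spec}(Q(G))$, together with $-2$ with multiplicity $m-n=p^{2}-p$. The spectrum of $Q(G)$ is obtained from the equitable partition into the cells $\{w\}$, $\{u_{1},\dots,u_{p+1}\}$, $\{v_{1},\dots,v_{p-1}\}$: the $3\times3$ quotient matrix has characteristic polynomial $\bigl(t-(2p-1)\bigr)\bigl(t^{2}-(4p-1)t+4(p^{2}-p-1)\bigr)$, while the vectors supported on the $u$'s (respectively on the $v$'s) and summing to zero yield the eigenvalue $p$ with multiplicity $p$ (respectively $p-2$ with multiplicity $p-2$). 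Thus $\operatorname{Spec}(L(G))$ is $2p-3$, the two roots of $t^{2}-(4p-5)t+(4p^{2}-12p+2)$, $p-2$ with multiplicity $p$, $p-4$ with multiplicity $p-2$, and $-2$ with multiplicity $p^{2}-p$. For $p\ge4$ a short check shows the least member of $\operatorname{Spec}(Q(G))$ is $p-2\ge2$, so $-2$ is the only negative eigenvalue of $L(G)$; since $\operatorname{tr}A(L(G))=0$, this forces $\mathcal{E}(L(G))=2\sum_{\lambda\ge0}\lambda=4(p^{2}-p)$.

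Next I would use the partition $\pi$ of $V(L(G))=E(G)$ into $P_{1}=\{wu_{i}\}$, $P_{2}=\{u_{i}u_{j}\}$, $P_{3}=\{wv_{j}\}$, $P_{4}=\{v_{i}v_{j}\}$, which is equitable for $L(G)$ and hence for $\overline{L(G)}$. Let $U$ be the span of the four cell indicators and $W=U^{\perp}$. The eigenvectors $R^{T}y$ of $L(G)$, with $y$ running over the quotient eigenvectors of $Q(G)$, all lie in $U$, so three of the four eigenvalues of the quotient matrix $B_{\pi}$ of $L(G)$ are $2p-3$ and the two roots of the quadratic above, while the fourth is forced by $\operatorname{tr}B_{\pi}=6p-10$ to equal $-2$; hence the eigenvalues of $L(G)$ living on $W$ are $p-2$ (multiplicity $p$), $p-4$ (multiplicity $p-2$), $-2$ (multiplicity $p^{2}-p-1$). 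Since $\mathbf 1\in U$, every eigenvector in $W$ is orthogonal to $\mathbf 1$, so it is an eigenvector of $\overline{L(G)}=J-I-A(L(G))$ with eigenvalue $-1-\mu$; this gives $-(p-1)$ (multiplicity $p$), $-(p-3)$ (multiplicity $p-2$), $1$ (multiplicity $p^{2}-p-1$). The remaining four eigenvalues of $\overline{L(G)}$ are those of the quotient matrix $\overline{B_{\pi}}$, whose entries are read off from $B_{\pi}$ and the cell sizes $p+1,\binom{p+1}{2},p-1,\binom{p-1}{2}$; its characteristic polynomial should factor as $\bigl(t-p(p-1)\bigr)(t-1)\bigl(t+2(p-2)\bigr)\bigl(t+2(p-1)\bigr)$. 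Collecting everything, $\operatorname{Spec}(\overline{L(G)})$ is $p(p-1)$, $1$ with multiplicity $p^{2}-p$, $-(p-1)$ with multiplicity $p$, $-(p-3)$ with multiplicity $p-2$, and the simple eigenvalues $-2(p-2)$ and $-2(p-1)$; for $p\ge4$ its positive eigenvalues are $p(p-1)$ and the $1$'s, so (using again that the trace is $0$) $\mathcal{E}(\overline{L(G)})=2\bigl(p(p-1)+(p^{2}-p)\bigr)=4(p^{2}-p)$, which equals $\mathcal{E}(L(G))$ and proves the proposition.

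The equitable-partition bookkeeping is routine; the substance is the two $4\times4$ (and one $3\times3$) characteristic-polynomial computations, and the step I expect to be the main obstacle is verifying that $\det(tI-\overline{B_{\pi}})$ factors with the clean roots $p(p-1),1,-2(p-2),-2(p-1)$. This is where an algebraic slip is most likely, and it is also where $p\ge4$ is genuinely used: for $p\le3$ the line graph $L(G)$ has negative eigenvalues other than $-2$, the absolute-value bookkeeping changes, and the identity fails (for instance $p=3$ gives $\mathcal{E}(L(G))=26\ne24=\mathcal{E}(\overline{L(G)})$). I would safeguard this step either by checking the characteristic polynomial of $\overline{B_{\pi}}$ coefficient-by-coefficient as a polynomial in $p$, or by exhibiting explicit eigenvectors of $\overline{B_{\pi}}$ (the positive Perron vector for $p(p-1)$, and small integer vectors for the other three).
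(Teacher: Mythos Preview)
Your approach is essentially the paper's: compute the full spectra of $L\bigl(K_{p+2}^{(p)}\bigr)$ and of its complement and read off that both energies equal $4p(p-1)$. The paper simply asserts the two spectra without derivation, whereas your argument via $Q(G)=RR^{T}$ and the four-cell equitable partition actually establishes them (and, incidentally, your eigenvalues agree with the paper's own Table~6 entry for $LG_{5}^{9}$, i.e.\ $p=4$, while the general spectral formulas displayed in the paper's proof contain several apparent misprints).
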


\begin{proof}
The result follows from the following facts
$$
Spec\left(L\left(K_{p+2}^{(p)}\right)\right)=
  \left(\begin{matrix}
                    -2     & p-3 & p-1 & \frac{4p-1-\sqrt{8p+25}}{2} & 2p-1 & \frac{4p-1+\sqrt{8p+25}}{2} \\[3mm]
                    p(p+1) & p-1 & p   &  1                          &  1   & 1
                  \end{matrix}\right)
$$
and
$$
Spec\left(\overline{L\left(K_{p+2}^{(p)}\right)}\right)=
  \left(\begin{matrix}
                    1     & -p+2 & -p    & -2(p-1) & -2p & p(p+1) \\[3mm]
                    p(p+1)&  p-1 & p+1   &  1      &  1  & 1
                  \end{matrix}\right).
$$
\end{proof}

It is clear that $LG^{9}_5 \cong K_6^{(4)}$ and hence by using Proposition \ref{prop-A0}, we deduce that $LG^{9}_5 \in \Omega$. Also, the next proposition (Proposition \ref{prop-A1}) ensures that the graphs
$LG^{5}_1$, $LG^{6}_2$, $LG^{7}_1$, $LG^{8}_2$, $LG^{8}_3$, $LG^{8}_5$, $LG^{9}_1$, $LG^{9}_3$, $LG^{9}_4$, $LG^{10}_1$, $LG^{10}_2$, $LG^{10}_4$ and $LG^{10}_8$ belong to the class $\Omega$.

\begin{proposition} {\rm \cite{Ramane-19}} \label{prop-A1}
For $p,q\ge2$, it holds that $\mathcal{E}\left(L\left(K_{p,q}\right)\right) = \mathcal{E}\left(\overline{L\left(K_{p,q}\right)}\right)$, where $K_{p,q}$ is the complete bipartite graph of order $p+q$.

\end{proposition}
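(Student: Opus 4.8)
The plan is to compute the spectra of both $L(K_{p,q})$ and $\overline{L(K_{p,q})}$ explicitly and to verify that the two sums of absolute values of eigenvalues coincide. The key structural input is the classical identification of the line graph of a complete bipartite graph with a rook's graph, namely $L(K_{p,q})\cong K_p\,\square\,K_q$, the Cartesian product of $K_p$ and $K_q$: a vertex of the line graph is a pair $(i,j)$ with $i$ on the $p$-side and $j$ on the $q$-side, and two such pairs are adjacent exactly when they agree in precisely one coordinate, which is the definition of adjacency in $K_p\,\square\,K_q$. Combining this with the standard rule $Spec(G\,\square\,H)=\{\lambda+\mu:\lambda\in Spec(G),\ \mu\in Spec(H)\}$ (multiplicities multiplying) and $Spec(K_p)=\{p-1\}\cup\{-1\ \text{with multiplicity}\ p-1\}$ yields the whole spectrum of $L(K_{p,q})$ in one step.

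Carrying this out, the eigenvalues of $L(K_{p,q})$ are $p+q-2$ with multiplicity $1$, $p-2$ with multiplicity $q-1$, $q-2$ with multiplicity $p-1$, and $-2$ with multiplicity $(p-1)(q-1)$; the total multiplicity is $pq=|E(K_{p,q})|$, as it must be, and $L(K_{p,q})$ is $(p+q-2)$-regular. Since $p,q\ge 2$, the numbers $p+q-2$, $p-2$, $q-2$ are all nonnegative, so
\[
\mathcal{E}\big(L(K_{p,q})\big)=(p+q-2)+(q-1)(p-2)+(p-1)(q-2)+2(p-1)(q-1),
\]
and a one-line expansion reduces the right-hand side to $4(p-1)(q-1)$.

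For the complement I would use the fact that a $d$-regular graph $G$ on $n$ vertices has $Spec(\overline{G})$ obtained from $Spec(G)$ by replacing the Perron value $d$ with $n-1-d$ and every other eigenvalue $\lambda$ with $-1-\lambda$ (the eigenvectors orthogonal to the all-ones vector are shared by $A(G)$ and $A(\overline{G})=J-I-A(G)$). Applying this with $n=pq$ and $d=p+q-2$ turns the previous list into: $(p-1)(q-1)$ with multiplicity $1$, $-(p-1)$ with multiplicity $q-1$, $-(q-1)$ with multiplicity $p-1$, and $1$ with multiplicity $(p-1)(q-1)$. Summing absolute values once more gives $(p-1)(q-1)+(p-1)(q-1)+(p-1)(q-1)+(p-1)(q-1)=4(p-1)(q-1)=\mathcal{E}\big(L(K_{p,q})\big)$, which finishes the proof.

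The only points requiring a little care — rather than real difficulty — are: (i) pinning down the isomorphism $L(K_{p,q})\cong K_p\,\square\,K_q$, which I would either cite or prove in two lines from the adjacency description above; and (ii) the degenerate cases $p=2$ or $q=2$, in which some of the eigenvalues $p-2$, $q-2$, $-2$ (and likewise their complement counterparts) happen to coincide. Such coincidences merely merge multiplicity classes and leave the sums of absolute values untouched, so the identity $\mathcal{E}=4(p-1)(q-1)$ holds uniformly for all $p,q\ge 2$; I would state this explicitly. If one wishes to avoid the Cartesian-product identity altogether, an alternative is to obtain $Spec(L(K_{p,q}))$ from the incidence matrix $B$ of $G=K_{p,q}$ via $B^{\top}B=A(L(G))+2I$ together with $BB^{\top}=A(G)+D(G)$; because $K_{p,q}$ is only semiregular this route needs a bit more bookkeeping, but it lands on the same four eigenvalues.
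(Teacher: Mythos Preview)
Your proof is correct. The paper does not supply its own proof of this proposition; it merely quotes the result from \cite{Ramane-19} and uses it as a black box, so there is nothing in the present paper to compare your argument against.

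Your route via the identification $L(K_{p,q})\cong K_p\,\square\,K_q$, the additive rule for the spectrum of a Cartesian product, and the regular-graph complement formula (the paper's Theorem~\ref{Thm-AAA0}) is the standard and cleanest way to obtain the explicit spectra and the common value $\mathcal{E}=4(p-1)(q-1)$. The checks you flag for the degenerate cases $p=2$ or $q=2$ are appropriate and are handled correctly: coinciding eigenvalues simply merge multiplicity classes, and since each eigenvalue enters the energy through its absolute value the formula is unaffected. Your alternative route through $B^{\top}B=A(L(G))+2I$ and $BB^{\top}=A(G)+D(G)$ also works; for a semiregular bipartite $G=K_{p,q}$ the nonzero eigenvalues of $BB^{\top}$ are easy to read off, and one recovers the same four eigenvalue classes for $A(L(K_{p,q}))$ after the shift by $-2$.
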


The energies or the characteristic polynomials of the line graphs and complements of the line graphs, of those graphs of Figure \ref{fig6} which are different from $LG^{5}_1$, $LG^{6}_2$, $LG^{7}_1$, $LG^{7}_3$, $LG^{8}_2$, $LG^{8}_3$, $LG^{8}_5$, $LG^{9}_1$, $LG^{9}_3$, $LG^{9}_4$, $LG^{10}_1$, $LG^{10}_2$, $LG^{10}_4$ and $LG^{10}_8$, are specified in Table \ref{Class6}.
We remark that the equations
$ \mathcal{E}\left(L(LG^{10}_9)\right) = \mathcal{E}\left(\overline{L(LG^{10}_9)}\right)$,
$ \mathcal{E}\left(L(LG^{10}_{10})\right) = \mathcal{E}\left(\overline{L(LG^{10}_{10})}\right)$
and
$ \mathcal{E}\left(L(LG^{10}_{12})\right) = \mathcal{E}\left(\overline{L(LG^{10}_{12})}\right)$
are verified up to 2000 decimal places using Maple \cite{MAPLE}.

\begin{figure}[ht]
 \centering
  \includegraphics[width=0.752\textwidth]{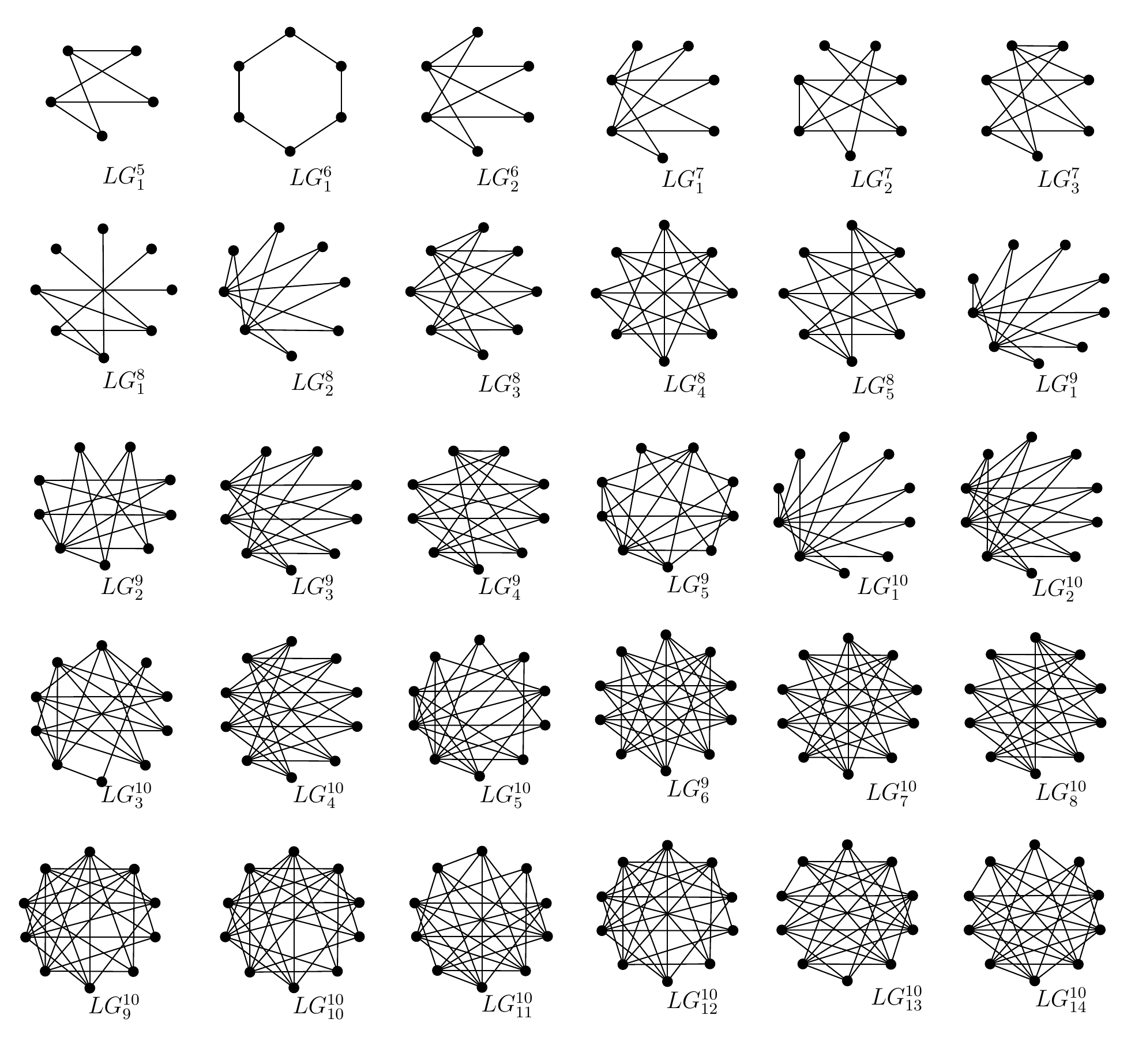}
   \caption{All the possible members of the graph class $\Omega$.}
    \label{fig6}
      \end{figure}

\scriptsize
\begin{longtable}[c]{|l|l|l|}
 \caption{Energies or the characteristic polynomials of the line graphs and complements of the line graphs, of some graphs of Figure \ref{fig6}.\label{Class6}}
\endfirsthead
\multicolumn{3}{c}{Continuation of Table \ref{Class6}}\\ \hline
\rule{0pt}{10pt} Graph $LG^{k}_i$&  Eigenvalues or Characteristic Polynomials of $L(LG^{k}_i)$ and $\displaystyle\overline{L(LG^{k}_i)}$ & Energies of $L(LG^{k}_i)$ and $\overline{L(LG^{k}_i)}$ \\
\endhead \hline
\endfoot
\hline
\rule{0pt}{10pt} Graph $LG^{k}_i$ &   Eigenvalues or Characteristic Polynomials of $L(LG^{k}_i)$ and $\overline{L(LG^{k}_i)}$ & Energies of $L(LG^{k}_i)$ and $\overline{L(LG^{k}_i)}$ \\ \hline

$LG^{6}_{1}$ & $-2, -1, -1, 1, 1, 2 $ & 8 \\

 & $-2, -2, 0, 0, 1, 3$ & \\ \hline

$LG^{7}_{2}$ & $1, \pm \sqrt 2 , \pm \sqrt 2 , - 2, - 2, - 2,\frac{{5 \pm \sqrt {17} }}{2}$ & $12+4\sqrt{2}$ \\

 & $5,1,1,1, - 2, - 2, - 1 \pm \sqrt 2 , - 1 \pm \sqrt 2$ & \\ \hline

$LG^{8}_{1}$ & $0, - 2,1 \pm \sqrt 5 , \pm \sqrt 2 , \pm \sqrt 2 $ & $2+2\sqrt{5}+4\sqrt{2}$ \\

 & $1, - 1,2 \pm \sqrt 5 , - 1 \pm \sqrt 2 , - 1 \pm \sqrt 2 $ & \\ \hline

$LG^{8}_{4}$ & $0,6,2,2,2,2, - 2, - 2, - 2, - 2, - 2, - 2, - 2, - 2,1 \pm \sqrt 5  $ & $30 + 2\sqrt{5}$\\
 & $9,1,1,1,1,1,1,1,1, - 1, - 3, - 3, - 3, - 3, - 2 \pm \sqrt 5 $ & \\ \hline

$LG^{9}_{2}$ & $3,1,1,1,1, - 1, - 1, - 2, - 2, - 2, - 2, - 2, - 2, - 2,\frac{{9 \pm \sqrt {41} }}{2}$ & 32 \\

            & $9,1,1,1,1,1,1,1,0,0, - 2, - 2, - 2, - 2, - 4, - 4$ & \\ \hline
									
$LG^{9}_{5}$ & $9,5,2,2,2,2,2,0,0, \underbrace{-2,\cdots,-2}_{\text{12 times}}$ & 48 \\
             & $12,\underbrace{1,\cdots,1}_{\text{12 times}}, - 1, - 1, - 3, - 3, - 3, - 3, - 4, - 6$ &  \\ \hline 			
													
$LG^{10}_{3}$ & $4 \pm 2\sqrt 3, \underbrace{-2,\cdots,-2}_{\text{11 times}}, -1,-1,2,2,2,2,4,4$ & 48 \\    			
              & $13, - 2,0,0,\underbrace{1,\cdots,1}_{\text{11 times}}, -3,-3,-3,-3,-5,-5$ &  \\   \hline 			

$LG^{10}_{5}$ & $5 \pm \sqrt {21} ,6,0,\underbrace{-2,\cdots,-2}_{\text{15 times}},1,1,2,2,4,4$ & 60 \\    			
              & $15, - 7, - 1,\underbrace{1,\cdots,1}_{\text{15 times}},-2,-2,-2,-3,-3,-5,-5 $ &             \\   \hline 			

$LG^{10}_{6}$ & $\frac{{5 \pm \sqrt {17} }}{2},8, - 1,\underbrace{-2,\cdots,-2}_{\text{15 times}},3,3,3,3,3,3$  & 62  \\    			
              & $\frac{{ - 7 \pm \sqrt {17} }}{2},16,0,\underbrace{1,\cdots,1}_{\text{15 times}},-4,-4,-4,-4,-4,-4 $ &     \\  \hline

$LG^{10}_{7}$ &  $\frac{{3 \pm \sqrt {33} }}{2},8,1,\underbrace{-2,\cdots,-2}_{\text{15 times}},3,3,3,3,3,3$  & $57+\sqrt{33}$ \\    			
              &  $\frac{{ - 5 \pm \sqrt {33} }}{2},16, - 2,\underbrace{1,\cdots,1}_{\text{15 times}},-4,-4,-4,-4,-4,-4 $ &        \\  \hline

$LG^{10}_{9}$ & ${(\lambda  - 2)^2}({\lambda ^2} - 10\lambda  + 8){({\lambda ^3} - 10{\lambda ^2} + 29\lambda  - 22)^2}{(\lambda  + 2)^{17}}$ &   \\

              & $(\lambda  - 17)(\lambda  + 2){(\lambda  + 3)^2}{({\lambda ^3} + 13{\lambda ^2} + 52\lambda  + 62)^2}{(\lambda  - 1)^{17}} $  &    \\ \hline
							
$LG^{10}_{10}$ & $\left\{ {\begin{array}{*{20}{c}}
  {(\lambda  - 2){{({\lambda ^3} - 10{\lambda ^2} + 31\lambda  -29)}}({\lambda ^2} - 10\lambda  + 8) } \\
  {\times({\lambda ^4} - 12{\lambda ^3} + 47{\lambda ^2} - 67\lambda  + 30)}(\lambda  + 2)^{17}
\end{array}} \right\}$ &   \\
               & $\left\{ {\begin{array}{*{20}{c}}
  {(\lambda  - 17)(\lambda  + 3)(\lambda  + 2)({\lambda ^3} + 13{\lambda ^2} + 54\lambda  + 71)} \\
  {\times({\lambda ^4} + 16{\lambda ^3} + 89{\lambda ^2} + 201\lambda  + 157){{(\lambda  - 1)}^{17}}}
\end{array}} \right\}$  &        \\ \hline

$LG^{10}_{11}$ & $\underbrace{-2,\cdots,-2}_{\text{17 times}},4,3,3,2,5 \pm \sqrt {17} ,\frac{{5 \pm \sqrt {17} }}{2},\frac{{7 \pm \sqrt {17} }}{2}$ & 68 \\  							 
		           & $17, - 5, - 3, - 2, - 4, - 4,\frac{{ - 7 \pm \sqrt {17} }}{2},\frac{{ - 9 \pm \sqrt {17} }}{2},\underbrace{1,\cdots,1}_{\text{17 times}}$ & \\ \hline
$LG^{10}_{12}$ & $\left\{ {\begin{array}{*{20}{c}}
  {(\lambda  - 2)({\lambda ^2} - 10\lambda  + 8)({\lambda ^3} - 8{\lambda ^2} + 17\lambda  - 8)} \\
  {\times(\lambda  - 4)({\lambda ^3} - 10{\lambda ^2} + 29\lambda  - 22){{(\lambda  + 2)}^{17}}}
\end{array}} \right\}$ &  \\

  		         & $\left\{ {\begin{array}{*{20}{c}}
  {(\lambda  + 5)(\lambda  + 3)(\lambda  + 2)({\lambda ^3} + 11{\lambda ^2} + 36\lambda  + 34)} \\
  {\times(\lambda  - 17)({\lambda ^3} + 13{\lambda ^2} + 52\lambda  + 62){{(\lambda  - 1)}^{17}}}
\end{array}} \right\}$ &     \\ \hline

$LG^{10}_{13}$ & $\underbrace{-2,\cdots,-2}_{\text{17 times}},5,4,4,3,2,1,5 \pm \sqrt {17} ,\frac{{5 \pm \sqrt {17} }}{2}$ & 68  \\  	
               & $17,-6,-5,-5,-4,-3,-2,-2,\frac{{ - 7 \pm \sqrt {17} }}{2},\underbrace{1,\cdots,1}_{\text{17 times}}$ &         \\  \hline	

$LG^{10}_{14}$ & $\underbrace{-2,\cdots,-2}_{\text{17 times}},4,3,3,2,5 \pm \sqrt {17} ,\frac{{5 \pm \sqrt {17} }}{2},\frac{{7 \pm \sqrt {17} }}{2}$ & 68 \\
							 & $17,-5,-4,-4,-3,-2,\frac{{ - 7 \pm \sqrt {17} }}{2},\frac{{ - 9 \pm \sqrt {17} }}{2},\underbrace{1,\cdots,1}_{\text{17 times}}$ &       \\  	\hline
\end{longtable}

\normalsize
\baselineskip=0.30in

We end this article by giving a result about a pair of complementary equienergetic graphs. For this, following Haemers \cite{Haemers}, we firstly state some definitions concerning designs.
A 2-($v, k, \lambda$) design, with parameters $v$, $k$ and $\lambda$, consists of a finite point set $\mathcal{P}$ of cardinality $v$ and a
collection $\mathcal{B}$ of subsets (called blocks) of $\mathcal{P}$, such that:\\
(i) each block has cardinality $k$ with the constraint $2 \le k \le v - 1$,\\
(ii) each (unordered) pair of points occurs in exactly $\lambda$ blocks.\\
A design in which number of blocks and number of points are same is called symmetric.
The incidence matrix $N$ of a 2-($v, k, \lambda$) design is the ($0, 1$) matrix with rows indexed by the points, and
columns indexed by the blocks, such that $N_{i,j} = 1$ if the point $i$ belongs to the block $j$, and $N_{i,j} =0$ otherwise. The incidence graph of a design with incidence matrix $N$ is the bipartite graph with the adjacency matrix
$
  \left(\begin{matrix}
                    O     &  N  \\[2mm]
                    N^T   &  O
                  \end{matrix}\right).
$
We also need the following two results.

\begin{theorem} {\rm{\cite{Cvetkovic-79}}}\label{Thm-AAA1}
If $IG(v, k, \lambda)$ is the incidence graph of the symmetric 2-($v, k, \lambda$) design then
$$
 Spec\left( {IG(v,k,\lambda )} \right) =
\left(\begin{matrix}
                   k   &  \sqrt {k - \lambda}    & - k      &  - \sqrt {k - \lambda}   \\[3mm]
                   1   &                v - 1    &   1      &      v - 1
                  \end{matrix}\right).
$$
\end{theorem}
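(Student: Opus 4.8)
The plan is to compute the spectrum of the adjacency matrix $A$ of $IG(v,k,\lambda)$ directly from the defining combinatorial identities of a symmetric design, reducing everything to an eigenanalysis of the matrix $(k-\lambda)I+\lambda J$, where $J$ is the all-ones matrix. First I would record the two identities I will use. If $N$ is the incidence matrix of the symmetric $2$-$(v,k,\lambda)$ design then $N$ is square of order $v$, the usual flag count gives $k(k-1)=\lambda(v-1)$, and counting via point pairs gives $NN^{T}=(k-\lambda)I+\lambda J$, with $NJ=JN=kJ$; since $k\le v-1$ forces $\lambda<k$, the matrix $N$ is nonsingular, and then $N^{T}N=N^{-1}(NN^{T})N=(k-\lambda)I+\lambda J$ as well. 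The adjacency matrix is $A=\begin{pmatrix} O & N\\ N^{T} & O\end{pmatrix}$, so
\[
A^{2}=\begin{pmatrix} NN^{T} & O\\ O & N^{T}N\end{pmatrix}=\begin{pmatrix}(k-\lambda)I+\lambda J & O\\ O & (k-\lambda)I+\lambda J\end{pmatrix},
\]
which is block diagonal, and hence the spectrum of $A^{2}$ is the spectrum of $(k-\lambda)I+\lambda J$ taken with doubled multiplicities.

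Next I would diagonalise $(k-\lambda)I+\lambda J$. The all-ones vector $\mathbf 1$ is an eigenvector with eigenvalue $k-\lambda+\lambda v$, which by $k(k-1)=\lambda(v-1)$ equals $k^{2}$; every vector orthogonal to $\mathbf 1$ is an eigenvector with eigenvalue $k-\lambda$, a value that is at least $1$ by the inequality noted above (so all square roots appearing below are real). Thus $A^{2}$ has eigenvalue $k^{2}$ with multiplicity $2$ and eigenvalue $k-\lambda$ with multiplicity $2(v-1)$, and these are its only eigenvalues since $2+2(v-1)=2v$ is the order of $A$.

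Finally I would transfer this back to $A$. Every eigenvalue of $A$ is a square root of an eigenvalue of $A^{2}$, and since $k^{2}\ne k-\lambda$ the eigenvalues of $A$ lie in $\{\,k,-k,\sqrt{k-\lambda},-\sqrt{k-\lambda}\,\}$, with the multiplicities satisfying $m(k)+m(-k)=2$ and $m(\sqrt{k-\lambda})+m(-\sqrt{k-\lambda})=2(v-1)$. Because $IG(v,k,\lambda)$ is bipartite by construction (its vertex classes are the points and the blocks), its spectrum is symmetric about $0$, i.e.\ $m(\mu)=m(-\mu)$ for all $\mu$; combined with the two counts this forces $m(k)=m(-k)=1$ and $m(\sqrt{k-\lambda})=m(-\sqrt{k-\lambda})=v-1$, which is exactly the asserted spectrum. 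The only delicate point is this last bookkeeping step: one must resolve the multiplicities using bipartiteness of the incidence graph (so that connectedness, and with it Perron--Frobenius, is never needed), together with the two elementary symmetric-design identities invoked earlier; the rest is routine linear algebra.
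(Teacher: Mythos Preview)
Your argument is correct. The paper does not supply its own proof of this theorem: it is quoted verbatim from \cite{Cvetkovic-79} and used as a black box in the proof of Proposition~\ref{prop-3}. So there is nothing to compare your approach against within the paper itself.

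What you have written is essentially the standard derivation one finds in the cited reference: use $NN^{T}=N^{T}N=(k-\lambda)I+\lambda J$ together with $k(k-1)=\lambda(v-1)$ to read off the spectrum of $A^{2}$, then split the multiplicities using the bipartite symmetry of the spectrum. All the small checks you flag (that $k>\lambda$ follows from $k\le v-1$ via $\lambda=k(k-1)/(v-1)\le k-1$, that $N$ is invertible so $N^{T}N$ inherits the same form, and that $k^{2}\ne k-\lambda$ since the latter would force $\lambda=k(1-k)<0$) are sound, and your use of bipartiteness rather than connectedness to resolve the signs is a clean choice. No gaps.
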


\begin{theorem} {\rm \cite{Sachs-62}} \label{Thm-AAA0}
If $G$ is an $r$-regular graph of order $n$ with the eigenvalues $r,\lambda
_2, \cdots, \lambda_n$ then the eigenvalues of $\overline G$ are $n - r - 1,-\lambda_2 - 1, \cdots ,-\lambda_n - 1.$
\end{theorem}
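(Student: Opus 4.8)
The plan is to work directly with the classical identity
$A(\overline{G}) = J - I - A(G)$,
where $J$ is the $n \times n$ all-ones matrix and $I$ the identity, and to diagonalize all three matrices on the right-hand side simultaneously. Since $A(G)$ is real symmetric, $\mathbb{R}^n$ has an orthonormal basis of eigenvectors of $A(G)$; the whole argument hinges on choosing this basis compatibly with the all-ones vector.

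First I would observe that $r$-regularity of $G$ means every row of $A(G)$ sums to $r$, so the all-ones vector $\mathbf{j}$ satisfies $A(G)\mathbf{j} = r\mathbf{j}$; thus $\mathbf{j}$ is an eigenvector of $A(G)$ for the eigenvalue $r$. Next I would invoke the spectral theorem to obtain an orthonormal eigenbasis $\{v_1, v_2, \dots, v_n\}$ of $A(G)$ with $v_1 = \mathbf{j}/\sqrt{n}$ and $A(G)v_i = \lambda_i v_i$ for $i \geq 2$: this is possible because $\mathbf{j}$ lies entirely inside the eigenspace of $r$, so one may complete $\mathbf{j}/\sqrt{n}$ to an orthonormal basis of that eigenspace and then to an orthonormal eigenbasis of the whole space, and in particular $v_i \perp \mathbf{j}$ for every $i \geq 2$.

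Then I would simply compute the action of $A(\overline{G})$ on this basis. On $v_1$: since $J\mathbf{j} = n\mathbf{j}$, we get $A(\overline{G})\mathbf{j} = n\mathbf{j} - \mathbf{j} - r\mathbf{j} = (n - r - 1)\mathbf{j}$. On $v_i$ with $i \geq 2$: since $\mathbf{j}^{T} v_i = 0$ we have $J v_i = \mathbf{j}(\mathbf{j}^{T} v_i) = 0$, hence $A(\overline{G})v_i = 0 - v_i - \lambda_i v_i = (-\lambda_i - 1)v_i$. As $\{v_1, \dots, v_n\}$ is a basis of $\mathbb{R}^n$, the $n$ numbers $n - r - 1, -\lambda_2 - 1, \dots, -\lambda_n - 1$ are exactly the eigenvalues of $\overline{G}$, counted with multiplicity, which is the claim.

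The only delicate point, and the one worth spelling out, is the compatibility of the eigenbasis with $\mathbf{j}$ in the second step. For connected $G$ this is immediate, since then $r$ is a simple eigenvalue by Perron--Frobenius; but a regular graph may be disconnected, in which case $r$ has multiplicity equal to the number of components. The remedy is precisely the remark that $\mathbf{j}$ still sits inside the single eigenspace of $r$, so applying Gram--Schmidt within that eigenspace keeps all of the chosen vectors eigenvectors of $A(G)$. Apart from this, the proof is a one-line matrix manipulation.
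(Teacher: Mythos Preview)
Your argument is correct and is precisely the standard proof of this classical result: use $A(\overline{G})=J-I-A(G)$, note that regularity puts the all-ones vector $\mathbf{j}$ inside the $r$-eigenspace, choose an orthonormal eigenbasis of $A(G)$ containing $\mathbf{j}/\sqrt{n}$, and read off the eigenvalues of $A(\overline{G})$ on this basis. Your handling of the disconnected case (where $r$ need not be simple) is also correct.

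As for comparison with the paper: there is nothing to compare. The paper does not prove this theorem; it merely quotes it with attribution to Sachs \cite{Sachs-62} and then applies it in the proof of Proposition~\ref{prop-3}. So your proposal supplies a proof where the paper gives none.
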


We are now in position to state and prove the final result of this paper.

\begin{proposition}\label{prop-3}

For $l\ge4$, $IG(l,l - 1,l - 2)$ and $\overline {IG(l,l - 1,l - 2)}$ are complementary equienergetic graphs where $IG(l,l - 1,l - 2)$ is the incidence graph of the symmetric 2-$(l,l - 1,l - 2)$ design.

\end{proposition}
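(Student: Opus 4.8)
The plan is to read off both spectra from Theorems~\ref{Thm-AAA1} and~\ref{Thm-AAA0}, compute the two energies, check that they agree, and then exhibit a spectral invariant distinguishing the two graphs so that the pair is honestly non-self-complementary.

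First I would note that a symmetric $2$-$(l,l-1,l-2)$ design indeed exists for every $l\ge 4$: take $\mathcal{P}=\{1,2,\dots,l\}$ and let $\mathcal{B}$ be the family of the $l$ sets $\mathcal{P}\setminus\{i\}$; the numbers of points and of blocks coincide, and any pair $\{i,j\}$ of points lies in exactly the $l-2$ blocks that omit neither $i$ nor $j$. Hence $G:=IG(l,l-1,l-2)$ is a well-defined $(l-1)$-regular bipartite graph on $n=2l$ vertices (concretely, $K_{l,l}$ with a perfect matching removed). Applying Theorem~\ref{Thm-AAA1} with $v=l$, $k=l-1$, $\lambda=l-2$ gives
$$
Spec(G)=\left(\begin{matrix} l-1 & 1 & -(l-1) & -1\\[2mm] 1 & l-1 & 1 & l-1\end{matrix}\right),
$$
so $\mathcal{E}(G)=(l-1)\cdot 1+1\cdot(l-1)+(l-1)\cdot 1+1\cdot(l-1)=4(l-1)$.

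Next I would apply Theorem~\ref{Thm-AAA0} to $G$, which is $r$-regular with $r=l-1$ on $n=2l$ vertices. The dominant eigenvalue of $\overline{G}$ is $n-r-1=l$, and the remaining eigenvalues are the images of $1^{(l-1)},\,-(l-1),\,(-1)^{(l-1)}$ under $\mu\mapsto-\mu-1$, namely $(-2)^{(l-1)}$, $l-2$, and $0^{(l-1)}$. Thus
$$
Spec(\overline{G})=\left(\begin{matrix} l & l-2 & 0 & -2\\[2mm] 1 & 1 & l-1 & l-1\end{matrix}\right),
$$
and since $l\ge 4$ makes $|l-2|=l-2$, we obtain $\mathcal{E}(\overline{G})=l+(l-2)+0+2(l-1)=4(l-1)=\mathcal{E}(G)$. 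As $G$ and $\overline{G}$ obviously share the order $2l$, they are equienergetic.

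It remains to rule out self-complementarity, i.e., to show $G\not\cong\overline{G}$; once this is done, $G$ and $\overline{G}$ satisfy all the requirements in the definition of complementary equienergetic graphs (each is non-self-complementary, they are equienergetic, and $G\cong\overline{\,\overline{G}\,}$). Comparing the two spectra above, $0$ is an eigenvalue of $\overline{G}$ with multiplicity $l-1\ge 3$, whereas every eigenvalue of $G$ is $\pm(l-1)$ or $\pm1$; equivalently $l\in Spec(\overline{G})$ but $l\notin Spec(G)$. So the pair is non-cospectral, hence non-isomorphic, completing the argument. I do not foresee a real obstacle: the proof is a short spectral computation, the only mild care-points being the explicit existence of the design and the bookkeeping of multiplicities and absolute values in the two energy sums.
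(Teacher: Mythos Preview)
Your proof is correct and follows essentially the same route as the paper's: apply Theorem~\ref{Thm-AAA1} to obtain $Spec(G)$, apply Theorem~\ref{Thm-AAA0} to obtain $Spec(\overline{G})$, and compare the two energies, both equal to $4(l-1)$. Your argument is in fact slightly more complete than the paper's, since you also verify the existence of the symmetric $2$-$(l,l-1,l-2)$ design and explicitly check that $G\not\cong\overline{G}$ via the spectra, a point the paper's proof leaves implicit.
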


\begin{proof}
By Theorem \ref{Thm-AAA1}, it holds that
$$
 Spec\left( {IG(l,l-1,l-2)} \right) =
\left(\begin{matrix}
                   l-1   &     1      & 1-l     &     - 1   \\[3mm]
                     1   &  l- 1      &   1     &   l - 1
                  \end{matrix}\right).
$$
Also, by using Theorem \ref{Thm-AAA1}, we get
$$
 Spec\left(\overline {IG(l,l-1,l-2)} \right) =
\left(\begin{matrix}
                   l   &     -2   &  l - 2    &      0 \\[3mm]
                   1   &  l - 1   &      1    &  l - 1
                  \end{matrix}\right)
$$
and hence it holds that $\mathcal{E}\left( {IG(l,l - 1,l - 2)} \right) = \mathcal{E}\left( {\overline {IG(l,l - 1,l - 2)} } \right) = 4(l-1)$.

\end{proof}

\section*{Acknowledgement}
Suresh Elumalai's research is supported by University of Haifa, Israel, for the Postdoctoral studies and it is gratefully acknowledged.

\end{document}